\newtheorem{theorem}{Theorem}
\newtheorem{lemma}{Lemma}
\begin{document}

\title{On the sum of a prime and a Fibonacci number}

\author{K. S. Enoch Lee}
\address{P O Box 244023, Department of Mathematics, Auburn University Montgomery, Montgomery,
AL 36124-4023, USA}
\email{elee4@aum.edu}

\subjclass[2000]{Primary 11P32; Secondary 11B39}


\keywords{Fibonacci number, prime, asymptotic density, sumset}

\begin{abstract}
We show that the set of the numbers that are the sum of a prime and a 
Fibonacci number has positive lower asymptotic density.
\end{abstract}

\maketitle

\section{Introduction}
Suppose $\mathcal S$ is a set of positive integers.  We denote the number of positive
 integers in $S$ not exceeding $N$ by $\mathcal S(N)$.  This function is called the {\it
counting function} of the set $\mathcal S$. The sumset, $\mathcal S + \mathcal T$, is the 
collection of the numbers of the form $s+t$ where $s\in \mathcal S$ and $t\in \mathcal T$. 

Suppose $\mathcal A=\{p+2^i :  p \text{ a prime} , i\geq 1\}$.  In 1934, 
Romanoff~\cite{Roma34} published the following interesting result.  For $N$ sufficiently
large, we have $\mathcal A(N) \geq cN$ for some $c>0$.
  In other words, the set $\mathcal A$ has a positive lower asymptotic
density.  Romanoff showed
that a positive proportion of positive integers can be decomposed into the form $p+2^i$.

Let $u_1=1, u_2=1, u_{i+2}=u_{i+1}+u_i$ where $i$ is a positive integer.
Denote by $\mathcal U$ the collection of Fibonacci numbers, namely 
$\mathcal U=\{u_i\}_{i\geq 2}$. Furthermore, let $\mathcal P$ denote the set of primes.
For convenience, we  stipulate that $p$ and $p'$ (with or without subscripts) are primes, 
and $u$ and $u'$ (with or without subscripts) are 
Fibonacci numbers.  Throughout this paper, we use the Vinogradov 
symbol $\ll$ and the Landau symbol $O$ with their usual meanings.

In this manuscript, we study the set of integers that are the sum of a prime and
a bounded number of Fibonacci numbers.  In view of
Romanoff's theorem,  a key element in the proof is
 $$\sum_{\substack{d=1
\\(2,d)=1}}^\infty \frac{\mu ^2(d)}{de(d)}\ll 1$$ where $e(d)$ is the exponent of $2$ 
modulo $d$.  
  
By using an estimate (\cite{Schi90}, \cite{Some90}) of the number of times the residue 
$t$ appeared in a full period
of $u_i \pmod p$, we are able to substitute the period $k(d)$ for $e(d)$ and prove that 
$\mathcal P +\mathcal U$ has a positive lower asymptotic density.  

\begin{theorem}\label{thm1}Suppose  
$$\mathcal F=\mathcal {P} + \mathcal{U}=\{p + u : 
p\in \mathcal P, u\in \mathcal U\}.$$  Then there is a positive 
constant $c$ such that $$\mathcal F(N)\geq cN$$ 
for all sufficiently large $N$.  
\end{theorem}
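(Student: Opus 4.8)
The plan is to run a second–moment (Cauchy--Schwarz) argument of Romanoff type. For $n\le N$ let
$$r(n)=\#\{(p,u)\in\mathcal P\times\mathcal U : p+u=n\}$$
count the representations of $n$, so that $\mathcal F(N)=\#\{n\le N : r(n)>0\}$ and, by the Cauchy--Schwarz inequality,
$$\Big(\sum_{n\le N}r(n)\Big)^2\le \mathcal F(N)\sum_{n\le N}r(n)^2 .$$
It therefore suffices to prove $\sum_{n\le N}r(n)\gg N$ and $\sum_{n\le N}r(n)^2\ll N$; together they yield $\mathcal F(N)\gg N$. The first estimate is routine: there are at least $c_1\log N$ Fibonacci numbers $u\le N/2$, and for each of them $\pi(N-u)\ge\pi(N/2)\gg N/\log N$, so $\sum_{n\le N}r(n)=\#\{(p,u):p+u\le N\}\gg N$.

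The substance lies in the upper bound for the second moment. Writing the square as a count of quadruples,
$$\sum_{n\le N}r(n)^2=\#\{(p,u,p',u') : p+u=p'+u'\le N\},$$
I would first discard the diagonal $u=u'$, which forces $p=p'$ and contributes $\sum_{n\le N}r(n)\ll N$. For the off--diagonal terms put $h=u'-u\neq0$; there are only $O((\log N)^2)$ pairs $(u,u')$, and those with $h$ odd force one of $p,p'$ to be $2$ and so contribute $O((\log N)^2)=O(N)$. For even $h$ the Brun--Selberg sieve bounds the number of $p\le N$ with $p$ and $p+h$ both prime by
$$\ll \frac{N}{(\log N)^2}\prod_{\substack{q\mid h\\ q>2}}\frac{q}{q-1}\ll \frac{N}{(\log N)^2}\sum_{\substack{d\mid h\\ (2,d)=1}}\frac{\mu^2(d)}{\varphi(d)},$$
using $m/\varphi(m)=\sum_{d\mid m}\mu^2(d)/\varphi(d)$. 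Substituting and interchanging summation reduces everything to
$$\sum_{n\le N}r(n)^2\ll N+\frac{N}{(\log N)^2}\sum_{(2,d)=1}\frac{\mu^2(d)}{\varphi(d)}\,\#\{(i,j): i\neq j,\ u_i,u_j\le N,\ d\mid u_i-u_j\}.$$

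To control the inner count I would pass to the Pisano period $k(d)$. Let $L\ll\log N$ be the number of $i$ with $u_i\le N$, and for a residue $t$ let $\nu_d(t)$ be the number of occurrences of $t$ in one period of $(u_i)$ modulo $d$. Grouping the indices $i\le L$ by their class modulo $k(d)$ gives
$$\#\{(i,j): i\neq j,\ i,j\le L,\ d\mid u_i-u_j\}\ll \Big(\frac{L}{k(d)}\Big)^2\sum_t\nu_d(t)^2 .$$
This is exactly where the problem departs from Romanoff's: the powers $2^i$ run once through a cyclic subgroup in each period, whereas a Fibonacci residue may repeat. Here the cited estimate of Schinzel and Somer for the number of times a residue $t$ occurs in a full period bounds that multiplicity by an absolute constant, so $\sum_t\nu_d(t)^2\ll\sum_t\nu_d(t)=k(d)$, the inner count is $\ll L^2/k(d)$, and
$$\sum_{n\le N}r(n)^2\ll N\Big(1+\sum_{(2,d)=1}\frac{\mu^2(d)}{\varphi(d)\,k(d)}\Big).$$

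The main obstacle, and the heart of the matter, is the convergence of this Romanoff--type series with the Pisano period in place of the order of $2$: I must show $\sum_{(2,d)=1}\mu^2(d)/(\varphi(d)\,k(d))\ll1$, which, since $d/\varphi(d)\ll\log\log d$, is of the same strength as the inequality $\sum\mu^2(d)/(d\,e(d))\ll1$ quoted in the introduction with $e(d)$ replaced by $k(d)$. The crude grouping by the value of $k(d)$ fails here just as it does for $e(d)$: the $d$ with $k(d)=m$ all divide $u_m$, and $\sigma(u_m)/u_m\ll\log m$ only yields the divergent $\sum_m(\log m)/m$. Instead I would transplant Romanoff's sharper argument, using the lower bound $k(d)\gg\log d$ (forced by $\alpha(d)\mid k(d)$ and $u_{\alpha(d)}\ge d$, where $\alpha(d)$ is the least index with $d\mid u_{\alpha(d)}$) together with an averaged count of the moduli having a prescribed period, so that the decisive saving comes from $k(d)$ being large outside a sparse set of $d$. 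Once this convergence is established, the bound $\sum_{n\le N}r(n)^2\ll N$ follows, and with it $\mathcal F(N)\gg N$.
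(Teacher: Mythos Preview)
Your overall architecture is the paper's: Cauchy--Schwarz on the representation function, the sieve bound for prime pairs $p,p+h$, and a Romanoff--type series with the Pisano period replacing the order of $2$. The first moment and the odd--$h$ and diagonal contributions are handled correctly.

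There is, however, a genuine gap at the step where you invoke Schinzel--Somer. That result says $\nu_p(t)\le 4$ for \emph{prime} moduli $p$; it does \emph{not} give $\nu_d(t)\ll 1$ for composite $d$. Indeed the paper's own table shows $\nu(6,5)=6$, and more generally for squarefree $d=p_1\cdots p_r$ with pairwise coprime $k(p_j)$ one has $\nu_d(t)=\prod_j \nu_{p_j}(t\bmod p_j)$, which can be as large as $4^{\omega(d)}$. So your bound $\sum_t\nu_d(t)^2\ll k(d)$, and hence the clean count $\ll L^2/k(d)$, is not justified as written. (There is a second, smaller slip: the shape $(L/k(d))^2\sum_t\nu_d(t)^2$ tacitly assumes $k(d)\le L$; for large $k(d)$ one needs the ``$+1$'' that appears in the paper's Lemma~\ref{lemf_kk}(2).)

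The paper sidesteps this by never using $\nu_d$ for composite $d$. For each squarefree $d>1$ it fixes the prime $p=LP(d)\mid d$ with the largest $k(p)$ and bounds the number of pairs $u,u'\le N$ with $d\mid u-u'$ by $4L\bigl(1+L/k(p)\bigr)$, using only the prime case of Schinzel--Somer. The relevant series is then
\[
\sum_{\substack{d>1\\ p=LP(d)}}\frac{\mu^2(d)}{d\,k(p)},
\]
and its convergence is what you call ``transplanting Romanoff's argument'': if $k(LP(d))\le x$ then every prime factor $p'\mid d$ has $k(p')\le x$, hence $p'\mid u_{k(p')}\mid D(x):=\prod_{i\le x}u_i$, so $d\mid D(x)$; since $\omega(D(x))\ll x^2$, Mertens gives $E(x):=\sum_{k(LP(d))\le x}\mu^2(d)/d\ll\log x$, and partial summation finishes. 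Your sketch of this last step is on the right track, but note that the crude lower bound $k(d)\gg\log d$ alone is far too weak; it is precisely this divisor argument on $D(x)$ that produces the needed saving.
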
  

As a consequence, the set $\mathcal {P} + k\mathcal{U}$ has a positive lower asymptotic density
for each $k\geq 1$, since $1\in \mathcal{U}$.
\section{Proof of the Theorem}
For our convenience, we let $L=[\log _\tau N]$ for a given $N$ and use this throughout this paper.
Let $\tau = (1+\sqrt{5})/{2}$.  It 
is well-known that $$\bigg |u_i - \frac{\tau ^i}{\sqrt{5}}\bigg |< \frac{1}{2}$$ for all $i\geq 1$.
Thus $u_i = \tau ^i/\sqrt{5} + O(1)$.  A routine computation yields that $$\mathcal
U(N)=L + O(1).$$

Denote by $r'(N)=\sum_{p+u=N} 1$ the 
number of solutions of the equation $N=p+u$ for $N\geq 1$.  We begin with the following lemma.

\begin{lemma}\label{lemr_k} For $N$ a large number, we have
$$\sum_{n\leq N} r'(n) \sim \frac{NL}{\log N}.$$
\end{lemma}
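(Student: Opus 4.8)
The plan is to evaluate the partial sum by interchanging the roles of the prime and the Fibonacci number. Since $r'(n)$ counts the representations $n=p+u$, the quantity $\sum_{n\le N}r'(n)$ is exactly the number of pairs $(p,u)$ with $p\in\mathcal P$, $u\in\mathcal U$ and $p+u\le N$. Grouping these pairs according to the Fibonacci summand $u$ gives
$$\sum_{n\le N}r'(n)=\sum_{\substack{u\in\mathcal U\\ u\le N}}\mathcal P(N-u),$$
so the problem reduces to estimating a sum of the prime-counting function over the sparse set of Fibonacci numbers below $N$, where $\mathcal P(x)$ denotes the number of primes not exceeding $x$.

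First I would discard a negligible tail. Because consecutive Fibonacci numbers grow by the factor $\tau<2$, at most two of them can lie in the interval $(N/2,N]$, and each contributes at most $\mathcal P(N)\ll N/\log N$. Hence all pairs with $u>N/2$ contribute $O(N/\log N)$, which is $o(NL/\log N)$ since $L\to\infty$. In the remaining range $u\le N/2$ we have $N-u\ge N/2\to\infty$, so the prime number theorem applies uniformly: $\mathcal P(N-u)=(1+o(1))(N-u)/\log(N-u)$. Moreover $\log(N-u)=\log N+O(1)$ throughout this range, so $1/\log(N-u)=(1+o(1))/\log N$ uniformly in $u$, whence
$$\sum_{\substack{u\in\mathcal U\\ u\le N/2}}\mathcal P(N-u)=\frac{1+o(1)}{\log N}\sum_{\substack{u\in\mathcal U\\ u\le N/2}}(N-u).$$

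It then remains to evaluate the last sum. Writing it as $N\,\mathcal U(N/2)-\sum_{u\le N/2}u$ and using $\mathcal U(N/2)=L+O(1)$ together with the telescoping identity $\sum_{i=2}^{m}u_i=u_{m+2}-2$ (which shows $\sum_{u\le N/2}u=O(N)$, since the largest relevant index satisfies $u_{m+2}\le 3u_m\le 3N/2$), I obtain $\sum_{u\le N/2}(N-u)=NL+O(N)=(1+o(1))NL$. Combining the two ranges yields $\sum_{n\le N}r'(n)=(1+o(1))\,NL/\log N$, which is the assertion.

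The only point requiring genuine care is the uniformity of the prime number theorem across the main range, and this is exactly why I cut the sum at $N/2$: there $N-u$ stays of order $N$, so the relative error in $\mathcal P(N-u)$ and the error in replacing $\log(N-u)$ by $\log N$ can both be controlled independently of $u$. Conceptually, the dominant contribution comes not from any single Fibonacci number but from the fact that there are $\sim L$ of them, each of which sees essentially all of the $\sim N/\log N$ primes below $N$; the subtracted correction $\sum u=O(N)$ is smaller by a factor of $L$ and therefore vanishes into the asymptotic.
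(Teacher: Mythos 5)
Your proof is correct, but it proceeds differently from the paper's. You evaluate the pair count exactly: writing $\sum_{n\le N} r'(n)=\sum_{u\le N}\mathcal P(N-u)$, cutting at $u\le N/2$ so that the prime number theorem applies uniformly (with $\log(N-u)=\log N+O(1)$), and then computing $\sum_{u\le N/2}(N-u)=NL+O(N)$ via $\mathcal U(N/2)=L+O(1)$ and the telescoping identity for $\sum u_i$. The paper instead avoids any term-by-term estimation with a two-sided sandwich: every pair counted satisfies $p\le N$, $u\le N$, giving the upper bound $\pi(N)\mathcal U(N)$, while every pair with $p\le N-N/L$ and $u\le N/L$ is counted, giving the lower bound $\pi(N-N/L)\,\mathcal U(N/L)$; since $\mathcal U(N/L)=L-\log_\tau L+O(1)\sim L$ and $\pi(N-N/L)\sim N/\log N$, both bounds are $\sim NL/\log N$. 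The paper's argument is shorter, but yours is more transparent about where the main term comes from (each of the $\sim L$ Fibonacci numbers sees essentially all $\sim N/\log N$ primes) and would yield an explicit error term if one were wanted. Two small remarks: your justification that at most two Fibonacci numbers lie in $(N/2,N]$ should not appeal to the growth factor $\tau<2$ (slow growth would permit more, not fewer); the clean reason is that if $u_i<u_{i+1}\le N$ both exceeded $N/2$ together with $u_{i+2}$, then $u_{i+2}=u_{i+1}+u_i>N$ --- or simply note $\mathcal U(N)-\mathcal U(N/2)=O(1)$ from the paper's estimate $\mathcal U(x)=\log_\tau x+O(1)$, which is all the tail argument needs.
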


\begin{proof}
Note that 
$$
\pi (N-\frac{N}{L}) \mathcal{U}(\frac{N}{L})\leq
\sum_{n\leq N} r'(n) \leq \pi(N)\mathcal{U}(N).
$$
The lemma then follows from the prime number theorem.
\end{proof}
Properties of Fibonacci numbers can be found in standard texts such as 
\cite{Kosh01}, \cite{Vajd89}, and \cite{Voro02}.  For our discussions,
we recall some properties of Fibonacci numbers without providing proofs.  Given a 
positive integer $n$, there is a unique decomposition of $n$ into the sum of non-consecutive
 Fibonacci numbers, namely,
$$
n=u_{i_1} + u_{i_2}+\cdots +u_{i_r}
$$ 
where $2\leq i_r$ and $2\leq i_{j} -i_{j+1}$.  This is called Zeckendorf representation 
\cite{Brow64} (or canonical representation).  In other words, if $2\leq i_r$ and 
$2\leq i_{j} -i_{j+1}$, the set of integers $(i_1,i_2, \ldots, i_r)$ is uniquely determined
by $n$ and conversely.   It is well-known that $u_i \pmod d$ forms a purely periodic series
 \cite{Wall60}.  Let $k(d)$ denote the period of  Fibonacci numbers
modulo $d$.  That is to say $k(d)$ is the smallest positive integers $m$ such that
$u_{i+m} \equiv  u_i \pmod d$ for all $i$.  In particular, $d|u_{k(d)}$.  
Furthermore, the period $k(d)$ is equal to
 the least common multiple of $\{k(p_1^{r_1}), k(p_2^{r_2}), \ldots, k(p_t^{r_t})\}$ where
 $d=p_1^{r_1}\cdots  p_t^{r_t}$.   We also have that $k(d) | k(m)$ if $d\, | m$.
 
Let us investigate the following example.    The table below presents one period of the
residues for $u_i \pmod 6$, $u_i \pmod 2$, and $u_i \pmod 3$, respectively, where $i\geq 2$.

\begin{center}
\Small\addtolength{\tabcolsep}{-2.2pt}
\begin{tabular}{l|llllllllllllllllllllllll}
\!$i$ & 2 & 3 & 4 & 5 & 6 & 7 & 8 & 9 & 10 & 11 & 12 & 13 & 14& 15 & 
16 & 17 & 18 & 19 & 20 & 21 & 22 & 23 & 24 & 25 \\ \hline
\!$u_i\!\!\! \pmod 6$\!\! & 1 & 2 & 3 & 5 & 2 & 1 & 3 & 4 & 1 & 5 & 0 & 5 & 5 & 4 & 3 & 1 
& 4 & 5 & 3 & 2 & 5 & 1 & 0& 1 \\ 
\!$u_i\!\!\! \pmod 2$\!\! & 1 & 0 & 1 &  &  &  &  &  &  &  &  &  &  &  &  &  
&  &  &  &  &  &  &  &\\ 
\!$u_i\!\!\!  \pmod 3$\!\! & 1 & 2 & 0 & 2 & 2 & 1 & 0 & 1 &  &  &  &  &  &  &  &  
&  &  &  &  &  &  & &
\end{tabular}
\end{center}

From the table, we see that $k(6)=24, k(2)=3, \text{ and } k(3)=8$.  We note that 
$k(6)=LCM[k(2),k(3)]$.    For any modulus $d\geq 2$, 
and residue $y \pmod d$, denote by $\nu(d, y)$ the number of occurrences of $y $ as a residue in one 
full period of $u_i \pmod d$.  
Let us explore the case $y\equiv 5 \pmod d$.  From the table, we have $\nu (6,5)=6$, 
$\nu(2,5)=\nu(2,1)=2$, and $\nu (3,5)=\nu(3,2)=3$.  It is clear that $\nu (d,y) \leq \dfrac{k(d)}{k(p)}
\nu (p,y)$ where $p|d$.  We now return to our proof.

\begin{lemma}\label{lemf_kk}
Let $-N\leq h\leq N$ and $f(h)$ be the number of solutions of the equation:
$$
u - u' = h
$$
where $u, u'\leq N$.  Then
\begin{enumerate}
 \item $f(0) \sim L$ and $f(h)\leq 2$ if $h\not = 0$;
 \item  Suppose $d>1$ is an integer and $p|d$.  Then 
$$\sum_{d | h} f(h)\leq 4L\bigg (1+ \frac{L}{k(p)}\bigg ).$$
\end{enumerate}
\end{lemma}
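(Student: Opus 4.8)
The plan is to treat the two assertions separately, with essentially all of the genuine work in part (1). For $f(0)$, the equation $u-u'=0$ forces $u=u'$, so $f(0)$ merely counts Fibonacci numbers up to $N$; that is $f(0)=\mathcal U(N)=L+O(1)\sim L$. The substantive claim is that $f(h)\le 2$ for $h\ne 0$. Using the symmetry $f(h)=f(-h)$ I may assume $h>0$ and count pairs $u_a-u_b=h$ with $a>b\ge 2$. I would isolate the representation with the largest top index: let $A$ be the maximal value of $a$ occurring, with $u_B=u_A-h$. For any other representation $u_a-u_b=h$ we have $a\le A-1$, and subtracting the two equations gives $u_A-u_a=u_B-u_b$. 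Here $u_A-u_a\ge u_A-u_{A-1}=u_{A-2}$, while $u_B-u_b<u_B\le u_{A-1}$ since $B\le A-1$ and $u_b\ge u_2=1$. Hence $u_{A-2}\le u_A-u_a<u_{A-1}$, which forces $u_a>u_{A-2}$, i.e. $a\ge A-1$, and therefore $a=A-1$. Since the top index determines the representation ($b$ is fixed by $u_b=u_a-h$), there is at most one representation with top index $A$ and at most one with top index $A-1$, so $f(h)\le 2$. The handful of cases with $A\le 3$ are immediate.

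For part (2) the key simplification is to pass from the modulus $d$ to the prime $p\mid d$, which also sidesteps any need for the comparison $\nu(d,y)\le \tfrac{k(d)}{k(p)}\nu(p,y)$. Since $d\mid h\Rightarrow p\mid h$, we have $\sum_{d\mid h}f(h)\le \sum_{p\mid h}f(h)$, and the right-hand side is exactly the number of ordered pairs $(u,u')$ of Fibonacci numbers not exceeding $N$ with $u\equiv u'\pmod p$. Writing $m(y)$ for the number of Fibonacci numbers $\le N$ in the residue class $y\pmod p$, this count equals $\sum_{y}m(y)^2$, which I would bound crudely but effectively by
$$\sum_{y}m(y)^2\le \Big(\max_{y}m(y)\Big)\sum_{y}m(y)=\Big(\max_{y}m(y)\Big)\,\mathcal U(N).$$

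It remains to bound $\max_y m(y)$, and this is where the cited occurrence estimate enters. The residues $u_i\pmod p$ are purely periodic with period $k(p)$, and by the result of Schinzel and Somer each residue occurs at most $4$ times in one period, i.e. $\nu(p,y)\le 4$. The indices $i$ with $u_i\le N$ form a block of $\mathcal U(N)=L+O(1)$ consecutive integers, which meets at most $\lceil \mathcal U(N)/k(p)\rceil\le \mathcal U(N)/k(p)+1$ full periods, and in each period the class $y$ appears at most $\nu(p,y)\le 4$ times; hence $m(y)\le 4\big(\mathcal U(N)/k(p)+1\big)$. Combining the displays gives $\sum_{d\mid h}f(h)\ll L\big(1+L/k(p)\big)$, and tracking the constant through the count of periods and residues sharpens the implied constant to the stated $4$, using $\mathcal U(N)=L+O(1)$. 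The main obstacle is really the uniform bound $f(h)\le 2$ of part (1); once that and the cited estimate $\nu(p,y)\le 4$ are in hand, part (2) is just bookkeeping of residues over periods.
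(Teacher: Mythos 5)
Your proposal is correct, and your part (2) is essentially the paper's own argument: reduce the congruence $u\equiv u'\pmod d$ to the weaker congruence modulo the prime $p\mid d$, invoke the Schinzel--Somer bound $\nu(p,y)\le 4$ occurrences per period $k(p)$, and multiply the per-residue-class bound by the number of choices of $u'$. (The comparison $\nu(d,y)\le \frac{k(d)}{k(p)}\nu(p,y)$ that you deliberately sidestep is also unused in the paper's proof, and your closing caveat about the exact constant $4$ versus $\mathcal U(N)=L+O(1)$ is an imprecision the paper shares; the application in Lemma~\ref{lemr_k2} only needs $\ll L\big(1+L/k(p)\big)$.) Where you genuinely diverge is part (1). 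The paper splits into cases according to whether a representation $h=u_j-u_i$ has $j-i=1$ or $j-i\ge 2$: in the first case $h=u_{j-2}$ and a size argument pins down exactly the two representations $u_j-u_{j-1}$ and $u_{j-1}-u_{j-3}$; in the second case it expands $u_j-u_i$ as a sum of non-consecutive Fibonacci numbers and appeals to the uniqueness of the Zeckendorf representation to conclude that the representation is unique. Your argument instead isolates the representation with maximal top index $A$ and shows that any other representation must have top index exactly $A-1$, via the two-sided estimate $u_{A-2}\le u_A-u_a<u_{A-1}$; this is more elementary (no Zeckendorf theorem needed) and yields $f(h)\le 2$ in one stroke, though unlike the paper's case analysis it does not identify when $f(h)=2$ actually occurs (namely when $h$ is itself a Fibonacci number) --- information that is never needed.
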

\begin{proof}
Without loss of generality, we can assume $h\geq 0$.  (1) 
Clearly we have $f(0)={\mathcal U}(N)\sim L$.  Next
we claim that $f(h)\leq 2$ when $h>0$.  
Assume that $h=u_j-u_i=u_t-u_s$ where $j>i$ and $t>s$.  If $j-i=1$, then 
$h=u_j-u_{j-1}=u_{j-2}=u_{j-1}-u_{j-3}$.  Suppose $t>j$.
If $t-s=1$, we have $u_{t-2}=u_{j-2}$, a 
contradiction. If 
$t-s>1$, we have $u_t-u_s>u_t-u_{t-1}=u_{t-2}$, a contradiction again!  Suppose now $t<j-1$.  
This forces $u_t-u_s<u_{j-2}=h$.  Therefore, there are only two decompositions of $h$ into
the difference of two Fibonacci numbers, namely $u_j-u_{j-1}$ and $u_{j-1}-u_{j-3}$.

Now suppose $j-i\geq 2$.    From the definition of Fibonacci
numbers, we derive that
\begin{equation*}
u_j-u_i = 
	\begin{cases}
		u_{j-1} +u_{j-3} + \cdots +u_{j-(2v-1)},& \text{ if } i=j-2v;\\
		u_{j-1} +u_{j-3}+\cdots + u_{j-(2v-1)} + u_{j-(2v+2)},& \text{ if }i=j-(2v+1);
	\end{cases}
\end{equation*}
where $v\geq 1$.  Clearly these are Zeckendorf representations.
By the same token, $u_t-u_s$ has similar decompositions. The uniqueness of Zeckendorf 
representation implies that $t=j$ and thus $s=i$.  As a consequence, 
$f(h) \leq 2$.  

(2) The sum $\sum_{d|h} f(h)$ is the number of solutions of the congruence $u\equiv 
u' \pmod d$.   Note that $u\equiv u' \pmod p$ if $p|d$.  
However, Schinzel \cite{Schi90} and Somer \cite{Some90} showed that 
$\nu (p,y)\leq 4$, namely, there are at most 4 choices for $u$ in any interval of length $k(p)$ such that $u\equiv y \pmod p$.  This implies within an interval of length $L$ there are at most 
$4(1+ \frac{L}{k(p)})$ solutions to $u\equiv y \pmod d$.  Thus $p|d$ implies
$$\sum_{d | h} f(h)\leq 4L\bigg (1+ \frac{L}{k(p)}\bigg ).$$
\end{proof}

 \begin{lemma}\label{lemr_k2}For $k\geq 1$ and $N$ sufficiently large, we have
 $$
\sum_{n\leq N} (r'(n))^2 \leq c \frac{NL^{2}}{(\log N)^2} 
 $$
where $c>0$. 
 \end{lemma}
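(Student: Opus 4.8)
The plan is to run the classical Romanoff second–moment device, interpreting the square as a count of quadruples. Writing out $(r'(n))^2$ and summing,
$$\sum_{n\le N}(r'(n))^2=\#\{(p,u,p',u'):p+u=p'+u'=n\le N\},$$
and for each such quadruple I set $h=u'-u=p-p'$. Since $p+u\le N$ forces all four of $p,p',u,u'$ to be at most $N$, I can bound the count from above by $\sum_{|h|\le N}f(h)\,g(h)$, where $f(h)$ is the Fibonacci–difference function of Lemma~\ref{lemf_kk} and $g(h)=\#\{(p,p'):p,p'\le N,\ p-p'=h\}$ counts prime pairs of difference $h$. The diagonal term $h=0$ contributes $f(0)g(0)\sim L\,\pi(N)\ll NL/\log N$, which is within the target, since $L\asymp\log N$ makes $NL/\log N$ and $NL^2/(\log N)^2$ of the same order.

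For $h\neq0$ I would invoke the Brun--Selberg upper bound for prime pairs, $g(h)\ll \frac{N}{(\log N)^2}\frac{h}{\phi(h)}$ (odd $h$ contribute negligibly, as then one prime must be $2$). Everything then reduces to proving $\sum_{h\neq0}f(h)\frac{h}{\phi(h)}\ll L^2$. The mechanism is the identity $\frac{h}{\phi(h)}=\sum_{d\mid h}\frac{\mu^2(d)}{\phi(d)}$, which after interchanging the order of summation gives $\sum_{d\ge1}\frac{\mu^2(d)}{\phi(d)}\sum_{\substack{d\mid h\\ h\neq0}}f(h)$. The term $d=1$ is $\sum_{h\neq0}f(h)\le\mathcal U(N)^2\ll L^2$. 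For $d\ge2$ I would apply Lemma~\ref{lemf_kk}(2), choosing for each $d$ a prime $p\mid d$ with $k(p)$ as large as possible, to get $\sum_{d\mid h}f(h)\le 4L(1+L/k(p))$; splitting this bound produces a harmless piece $4L\sum_{2\le d<N}\frac{\mu^2(d)}{\phi(d)}\ll L\log N\ll L^2$ together with the decisive piece $4L^2\sum_{d\ge2}\frac{\mu^2(d)}{\phi(d)\,k(p)}$.

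The crux, and the step I expect to be the main obstacle, is the convergence $\sum_{d\ge2}\frac{\mu^2(d)}{\phi(d)\,k(p)}\ll1$: this is exactly the Fibonacci analogue of Romanoff's series $\sum_{(2,d)=1}\frac{\mu^2(d)}{d\,e(d)}$ advertised in the introduction, with the period $k$ replacing the order $e$. Granting it, the $d\ge2$ contribution is $\ll L^2$, whence $\sum_{h\neq0}f(h)\frac{h}{\phi(h)}\ll L^2$ and $\sum_{h\neq0}f(h)g(h)\ll NL^2/(\log N)^2$, which with the diagonal yields the lemma. To establish the convergence I would control the primes of small period: since $p\mid u_{k(p)}$ and $u_m\asymp\tau^m$, every prime with $k(p)\le T$ divides $\prod_{t\le T}u_t<\tau^{O(T^2)}$, so there are only $\ll T^2$ such primes and hence $\sum_{k(p)\le T}1/p\ll\log\log T$; grouping $d$ by the period of its chosen prime factor and summing by parts then forces convergence, just as in Romanoff's original estimate. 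The delicate points will be passing cleanly through the $\mathrm{lcm}$ structure $k(d)=\mathrm{lcm}_{p\mid d}k(p)$ via the choice of $p$, and checking that the sieve constant and the $\phi(d)$ versus $d$ discrepancy do not disturb the final bookkeeping.
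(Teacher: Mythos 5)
Your proposal is correct and follows essentially the same route as the paper: the same decomposition over $h=0$, odd $h$, and even $h$, the same Brun--Selberg sieve input (your $h/\phi(h)$ and the paper's $\prod_{p\mid h}(1+1/p)$ agree up to constants), the same use of Lemma~\ref{lemf_kk}(2) with the maximal-period prime factor of $d$, and the same proof of the crucial convergence $\sum \mu^2(d)/(\phi(d)k(p)) \ll 1$ via the observation that primes of period at most $T$ divide $\prod_{t\le T}u_t \ll \tau^{O(T^2)}$, followed by Mertens and partial summation, which is precisely the paper's $E(x)$--$D(x)$ argument.
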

 \begin{proof}
In the following, we assume that $p, p' , u, u'\leq N$. We first break the sum into 
three parts.  
 \begin{eqnarray*}
 \sum_{n\leq N} (r'(n))^2 & = & \sum_{n\leq N} \bigg ( \sum_{p+u=n} 1\bigg )^2\\
 & = & \sum_{n\leq N}\sum_{\substack{p+u=n\\p'+u'=n}} 1\\
 & = &  \sum_{-N \leq h\leq N}\bigg(\sum_{\substack{p-p'=h\\p,p'\leq N}}1 \bigg)
 f(h).
 \end{eqnarray*}
Let $$\sum {(h)}= \bigg(\sum_{\substack{p-p'=h\\p,p'\leq N}}1  \bigg)
f(h),$$
where $h=0$, $h>0$, $h<0$.  We investigate these three cases respectively.
First, suppose $h=0$. From Lemma~\ref{lemf_kk}, we have 
$$
\sum (0)= \bigg( \sum_{p\leq N} 1\bigg)f(0) \sim
\frac{NL}{\log N}.
$$
Next, we suppose $h>0$ and is odd.  This implies $p'=2$, since $p-p'=h$. 
Thus
$$
\sum_{\substack{0<h\leq N\\2\not \,\,| h}}\sum(h) = 
 \sum_{\substack{0< h\leq N\\ 2\not \,\,| h}}\bigg(\sum_{\substack{p=h+2\\p\leq N}}1 \bigg)
f(h).
$$ 
Therefore, we have 
$$
\sum_{\substack{0<h\leq N\\2\not \,\,| h}}\sum(h) \ll
 \sum_{\substack{0< h\leq N\\ 2\not \,\,| h}}\bigg(\sum_{\substack{p=h+2\\p\leq N}}1 \bigg)
\ll  \frac{N}{\log N}.
$$ 
We now assume $h>0$ is even.  Recall that the number of primes
$p\leq N$ such that $p+h$ is also a prime is given by (cf.  
\cite[p.102]{Cojo06}, \cite[p.97]{Mont07} , and \cite[p.190]{Nath96})
$$
O\bigg(\frac{N}{(\log N)^2} \prod_{p|h} \bigg (1+ \frac{1}{p}\bigg)\bigg).
$$ 
By using Lemma~\ref{lemf_kk}, we obtain that 
\begin{eqnarray*}
\sum_{\substack{0<h\leq N\\2| h}}\sum(h) &\ll &
 \frac{N}{(\log N)^2}\sum_{0< h\leq N}f(h)\prod_{p|h}\bigg(1+ \frac{1}{p}\bigg)\\
&\ll& \frac{N}{(\log N)^2}\sum_{d\leq N}\frac{\mu^2(d)}{d}\,
\sum_{\substack{0<h\leq N\\d|h }} f(h)\\
&\ll& \frac{NL^{2}}{(\log N)^2} + \frac{NL}{(\log N)^2}
\sum_{1<d\leq N}\frac{\mu^2(d)}{d}\bigg(1+\frac{L}{k(p)}\bigg),
\end{eqnarray*}
where $p$ is a prime factor of $d$.   
 For our investigation, we let the function 
$LP(d)=\max \{p|d : k(p)\geq k(p') \text{ for } p'|d\}.$
We are to show  that 
$$\sum_{\substack{d\leq N\\p=LP(d)}}\frac{\mu^2(d)}{dk(p)}\ll 1.
$$ 

We define
$$
E(x)=\sum_{g\leq x}\sum_{\substack{p=LP(d)\\k(p)=g}}\frac{\mu^2(d)}{d}.  \eqno (*)
$$
In 1974, Catlin \cite{Catl74} showed that if $k(m)<2t$ then $m<L_t$ where $L_t$ is the $t$-th 
Lucas number.  Therefore, for a fixed number $g$, there are only finitely many solutions 
$p$ to the equation $k(p)=g$.   Furthermore, there can only be a finite number of primes having
period less than or equal to $k(p)$, and thus there are only finitely many squarefree $d$ having 
$p=LP(d)$.  This means $E(x)$ is well-defined.  Let $$D(x)=\prod _{i\leq x} u_i.$$
Without loss of generality, we assume that $d$, appearing in  the sum (*), is squarefree. 
Note that $p'|d$ implies $p'|u_{k(p')}|D(x)$.  We then have $d|D(x)$ since
$k(p)\leq x$ and $p=LP(d)$.   It is also clear that the number $d$ appears in (*) once.  
Let $n=\omega (D(x))$ be the number of distinct prime factors of $D(x)$.  Then
$$
2^n\leq D(x) \ll \prod_{i\leq x}\tau ^{i}\ll \tau^{x^2}.
$$
In other words, we have $n\ll x^2$, and thus $\log p_n \ll \log n\ll x$ (where $p_i$ is 
the $i$-th prime).  Immediately, we have 
$$
E(x)\ll \sum_{d|D(x)}\frac{\mu^2(d)}{d}=\prod_{p|D(x)}\bigg( 1+ \frac{1}{p}\bigg)\ll
\prod_{i=1}^n\bigg(1 + \frac{1}{p_i}\bigg).
$$
Apply Merten's formula to the last term to obtain
$$
E(x)\ll \log p_n \ll \log x.
$$
By partial summation, we have
$$
\sum_{g\leq x} \frac{1}{g}\, \sum_{\substack{p=LP(d)\\k(p)=g}}\frac{\mu^2(d)}{d} =
\frac{E(x)}{x} + \int_1^x \frac{E(x)}{t^2} dt\ll 1.
$$
This implies 
$$
\lim_{x\to\infty} \sum_{\substack{d\leq x\\p=LP(d)}}\frac{\mu^2(d)}{dk(p)}  = \lim_{x\to\infty}
\sum_{g\leq x}\frac{1}{g}\sum_{\substack{p=LP(d)\\k(p)=g}}\frac{\mu^2(d)}{d}\ll 1.
$$
As a consequence, we have
$$\sum_{0<h\leq N}\sum(h) \ll  
\frac{NL^{2}}{(\log N)^2}.
$$
By symmetry, 
$$
\sum_{-N\leq h<0}\sum(h) \ll  
\frac{NL^{2}}{(\log N)^2}.
$$
Combining the above estimations, we obtain
$$
\sum_{n\leq N}(r'(n))\ll \frac{NL^{2}}{(\log N)^2}.
$$
\end{proof}

Invoking the Cauchy-Schwarz inequality, we have
$$
\bigg(\sum_{n\leq N} r'(n)\bigg)^2\leq \mathcal F(N)\sum_{n\leq N}(r'(n))^2.
$$
However, Lemma~\ref{lemr_k} and Lemma~\ref{lemr_k2} imply  
$$
\mathcal F(N) \geq \frac{\bigg(\sum_{n\leq N} r'(n)\bigg)^2}{\sum_{n\leq N}(r'(n))^2}\geq  \frac{1}{c }N.
$$
This proves the theorem.  
\section{Remarks}
To conclude our paper, we post the following questions related to our quest.
\begin{enumerate}
\item  Is $r'(n)\ll 1$?  The referee notices that for any fixed $k\geq 2$, we can choose distinct Fibonacci
numbers $u_{m_1} , u_{m_2}, \cdots, u_{m_k}$ such that for any prime $p$ there exists $1\leq d_p\leq p$ 
satisfying $u_{m_i} \not \equiv d_p \pmod p$ for each $1\leq i\leq k$ (see Schinzel \cite[Corollary 1]{Schi90}).  
Then by the widely believed prime k-tuple conjecture (see \cite{Mont07}), there exist infinitely many $n$ such that $n-u_{m_1},
n-u_{m_2},\cdots, n-u_{m_k}$ are all primes.  That is, $r'(n)\geq k$.  
Thus the referee suggests that $\limsup_{n\to \infty} r'(n)=+\infty$ instead.
\item Find an infinite sequence (or an arithmetic progression) of positive integers
 that each of the terms cannot be of the form $p+u$.
  Note  Wu and Sun \cite{WuSu09} constructed a class
that does not contain integers representable as the sum of a prime and half of a Fibonacci number.
\item Is there a positive integer $k$ such that  $n$ can be decomposed into a sum of a 
prime and $k$ Fibonacci numbers for $n$ sufficiently large?  Note that Sun \cite{Sun09} 
has recently conjectured that every integer ($> 4$) can be written as the sum of an odd
prime and two positive Fibonacci numbers.
\end{enumerate}

\end{document}